\newtheorem{theorem}{Theorem}[section]
\newtheorem{lemma}[theorem]{Lemma}
\newtheorem{proposition}[theorem]{Proposition}
\newtheorem{corollary}[theorem]{Corollary}
\newtheorem{remark}[theorem]{Remark}
\newtheorem{definition}[theorem]{Definition}
\numberwithin{equation}{section}
\newcommand{\bthm}{\begin{theorem}}
\newcommand{\ethm}{\end{theorem}}
\newcommand{\blem}{\begin{lemma}}
\newcommand{\elem}{\end{lemma}}
\newcommand{\bprop}{\begin{proposition}}
\newcommand{\eprop}{\end{proposition}}
\newcommand{\bcor}{\begin{corollary}}
\newcommand{\ecor}{\end{corollary}}
\newcommand{\brem}{\begin{remark}}
\newcommand{\erem}{\end{remark}}
\newcommand{\bdefi}{\begin{definition}}
\newcommand{\edefi}{\end{definition}}
\newcommand{\bpf}{\begin{proof}}
\newcommand{\epf}{\end{proof}}
\newcommand{\bl}{\begin{array}{l}}
\newcommand{\bll}{\begin{array}{ll}}
\newcommand{\barr}{\begin{array}}
\newcommand{\earr}{\end{array}}
\newcommand{\bite}{\begin{itemize}}
\newcommand{\eite}{\end{itemize}}
\newcommand{\bequ}{\begin{equation}}
\newcommand{\eequ}{\end{equation}}
\newcommand{\beqa}{\begin{eqnarray}}
\newcommand{\eeqa}{\end{eqnarray}}
\newcommand{\beqy}{\begin{eqnarray*}}
\newcommand{\eeqy}{\end{eqnarray*}}
\begin{document}

\everymath{\displaystyle}

\title{Ground state solutions for a nonlinear Choquard equation}
\author{Luca Battaglia\thanks{Universit\`a degli studi Roma Tre, Dipartimento di Matematica e Fisica, Largo S. Leonardo Murialdo $1$, $00146$ Roma - lbattaglia@mat.uniroma3.it}}
\date{}

\maketitle

\begin{abstract}${}$\\
We discuss the existence of ground state solutions for the Choquard equation
$$-\Delta u+u=(I_\alpha*F(u))F'(u)\quad\quad\quad\text{in }\mathbb R^N.$$
We prove the existence of solutions under general hypotheses, investigating in particular the case of a homogeneous nonlinearity $F(u)=\frac{|u|^p}p$. The cases $N=2$ and $N\ge3$ are treated differently in some steps. The solutions are found through a variational mountain-pass strategy.\\
The results presented are contained in the papers \cite{mv15,bv}.
\end{abstract}\

\section{Introduction}\

We investigate the existence of solutions for nonlinear Choquard equations of the form
\begin{equation}\label{choq}
-\Delta u+u=(I_\alpha*F(u))F'(u)\quad\quad\quad\text{in }\mathbb R^N,
\end{equation}
where $\Delta$ is the standard Euclidean laplacian, $*$ indicates the convolution, $F\in C^1(\mathbb R,\mathbb R)$ is a smooth nonlinearity and $I_\alpha:\mathbb R^N\to\mathbb R$ is, for $\alpha\in(0,N)$, the Riesz potential:
\begin{equation}\label{riesz}
I_\alpha(x):=\frac{\Gamma\left(\frac{N-\alpha}2\right)}{\Gamma\left(\frac{\alpha}2\right)\pi^\frac{N}22^\alpha}\frac{1}{|x|^{N-\alpha}}.
\end{equation}

Problem \eqref{choq} can be seen as a non-local counterpart of the very well-known scalar field equation
\begin{equation}\label{scal}
-\Delta u+u=G'(u)\quad\quad\quad\text{in }\mathbb R^N,
\end{equation}
which can be formally recovered from \eqref{choq} by letting $\alpha$ go to $0$ and setting $G=\frac{F^2}2$.\\
Problem \eqref{scal} has been widely studied since many years. General existence results were provided in \cite{bl} when $N\ge3$ and \cite{bgk} (when $N=2$) under mild hypotheses on $G$.\\
Anyway, the argument from both \cite{bl} and \cite{bgk} does not seem to be suitable to attack problem \eqref{scal}: roughly speaking, the authors use a constrained minimization technique and then a dilation to get rid of the Lagrangian multiplier, which does not work in our case because of the scaling properties of the Riesz potential \eqref{riesz}.

We study the problem \eqref{choq} variationally: its solutions are critical points of the following energy functional on $H^1\left(\mathbb R^N\right)$:
\begin{equation}\label{i}
\mathcal I(u)=\frac{1}2\int_{\mathbb R^N}\left(|\nabla u|^2+|u|^2\right)-\frac{1}2\int_{\mathbb R^N}(I_\alpha*F(u))F'(u).
\end{equation}
In particular, we look for solutions at a mountain-pass level $b$ defined by
\begin{equation}\label{b}
b:=\inf_{\gamma\in\Gamma}\sup_{t\in[0,1]}\mathcal I(\gamma(t)),
\end{equation}
with
$$\Gamma:=\left\{\gamma\in C\left([0,1],H^1\left(\mathbb R^N\right)\right);\,\gamma(0)=0,\,\mathcal I(\gamma(1))<0\right\}.$$
In particular, we by-pass the issue of Palais-Smale sequences by a \emph{scaling trick} introduced in \cite{jea}, which basically allows us to consider Palais-Smale sequences also asymptotically satisfying the Poho\v zaev identity
\begin{equation}\label{poho}
\mathcal P(u):=\frac{N-2}2\int_{\mathbb R^N}|\nabla u|^2+\frac{N}2|u|^2-\frac{N+\alpha}2\int_{\mathbb R^N}(I_\alpha*F(u))F(u)=0,
\end{equation}
for which convergence is easier to be proved.\\
We can show existence of solutions under general hypotheses, in the same spirit of \cite{bl,bgk}. In the particular yet very important case of a power-type nonlinearity $F(u)=\frac{|u|^p}p$ such hypotheses are equivalent to $1+\frac{\alpha}N<p<\frac{N+\alpha}{N-2}$, which in \cite{mv13} is shown to be also a necessary condition. This shows that the hypotheses we make are somehow natural.\\
We also show that the mountain-pass type solution is also a ground state, namely an energy-minimizing solution: it satisfies
\begin{equation}\label{c}
\mathcal I(u)=c:=\inf\left\{\mathcal I(v):\,v\in H^1\left(\mathbb R^N\right)\setminus\{0\}\text{ solves }\eqref{choq}\right\}.
\end{equation}
We first show the existence of mountain-pass solutions in Section $2$ and then in Section $3$ we prove that they are actually ground states. Such results were originally presented in \cite{mv15} for the dimension $N\ge3$ and in \cite{bv} for the case $N=2$.

\section{Existence of mountain-pass solutions}\

We show here existence of a solution for \eqref{choq} under general hypotheses on $F$.\\
First of all, we want to exclude the trivial case of an identically vanishing $F$:
\begin{itemize}
\item[$(F_0)$] There exists $s_0\in\mathbb R$ such that $F(s_0)\ne0$.
\end{itemize}
Then, we also need some growth assumptions which give a well-posed variational formulation, namely a energy functional $\mathcal I$ being well-defined on $H^1\left(\mathbb R^N\right)$. Such assumptions are different depending whether the dimension is two or it is greater, since the limiting-case embeddings in Sobolev spaces are different: in the higher-dimensional case, we impose a power-type growth whereas in $\mathbb R^2$ we require one of exponential type:
\begin{itemize}
\item[$(N\ge3)\quad(F_1)$] There exists $C>0$ such that $|F'(s)|\le C\left(|s|^\frac{\alpha}N+|s|^\frac{\alpha+2}{N-2}\right)$ for any $s>0$
\item[$(N=2)\quad(F_1')$] For any $\theta>0$ there exists $C_\theta>0$ such that $|F'(s)|\le C_\theta\min\left\{1,|s|^\frac{\alpha}2\right\}e^{\theta|s|^2}$ for any $s>0$.
\end{itemize}
It is not hard to see that $(F_1)$, combined with Sobolev and Hardy-Littlewood-Sobolev inequality, implies the finiteness of the term $\int_{\mathbb R^N}(I_\alpha*F(u))F(u)$, hence the well-posedness and smoothness of the functional $\mathcal I$ defined by \eqref{i}. In dimension two we need, in place of Sobolev's inequality, a special form of the Moser-Trudinger inequality on the whole plane, which was given in \cite{at}:
\begin{equation}\label{mt}
\forall\beta\in(0,4\pi)\exists C_\beta>0\text{ such that}\,\int_{\mathbb R^2}|\nabla u|^2\le1\,\Rightarrow\,\int_{\mathbb R^2}\min\left\{1,u^2\right\}e^{\beta u^2}\le C_\beta\int_{\mathbb R^2}|u|^2
\end{equation}
The last hypotheses we need is a sort of \emph{sub-criticality} with respect to the critical power in Hardy-Littlewood-Sobolev inequality. Again, we state the condition differently depending on the dimension, since in dimension $2$ there is no critical Sobolev exponent:
\begin{itemize}
\item[$(N\ge3)\quad(F_2)$] $\lim_{s\to0}\frac{F(s)}{|s|^{1+\frac{\alpha}N}}=\lim_{s\to+\infty}\frac{F(s)}{|s|^\frac{N+\alpha}{N-2}}=0$
\item[$(N=2)\quad(F_2')$] $\lim_{s\to0}\frac{F(s)}{|s|^{1+\frac{\alpha}N}}=0$
\end{itemize}
Precisely, the result we present is the following:

\begin{theorem}\label{mp}${}$\\
Assume $F$ satisfies $(F_0),(F_1),(F_2)$ if $N\ge3$ and $(F_0),(F'_1),(F'_2)$ if $N=2$. Then, the problem \eqref{choq} has a non-trivial solution $u\in H^1\left(\mathbb R^N\right)\setminus\{0\}$.
\end{theorem}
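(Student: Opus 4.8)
The plan is to realize the non-trivial solution as a mountain-pass critical point of $\mathcal I$ at the level $b$ defined in \eqref{b}, with the usual caveat that, since we work on all of $\mathbb R^N$, compactness may fail for generic Palais--Smale sequences. First I would check the mountain-pass geometry: hypothesis $(F_2)$ (resp. $(F_2')$) gives that near $0$ the nonlinear term $\int(I_\alpha*F(u))F(u)$ is $o\bigl(\|u\|_{H^1}^2\bigr)$ — using Hardy--Littlewood--Sobolev together with Sobolev (resp. the Moser--Trudinger inequality \eqref{mt} and the $|s|^{\alpha/2}$ factor in $(F_1')$) — so that $\mathcal I(u)\ge \frac14\|u\|_{H^1}^2>0$ on a small sphere; and, exploiting $(F_0)$, one builds a function $v$ with $\int(I_\alpha*F(v))F(v)>0$ and then dilates, $v_t(x)=v(x/t)$, so that for large $t$ the scaling $t^N$ of the nonlinear term beats the $t^{N-2}$ (or for $N=2$ a linear-in-$t$ argument plus the $t^2$ mass term) of the quadratic part, giving $\mathcal I(v_t)\to-\infty$ and hence a path in $\Gamma$; thus $0<b<\infty$.

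Next I would set up the compactness fix. Rather than working with arbitrary Palais--Smale sequences at level $b$, I would use the scaling trick of Jeanjean \cite{jea}: introduce the auxiliary functional on $\mathbb R\times H^1(\mathbb R^N)$ given by $\widetilde{\mathcal I}(\sigma,u):=\mathcal I\bigl(u(\cdot/e^\sigma)\bigr)$, whose mountain-pass level coincides with $b$, and apply Ekeland's variational principle / the deformation lemma on the enlarged space. This produces a Palais--Smale sequence $(u_n)$ for $\mathcal I$ at level $b$ which, in addition, \emph{asymptotically satisfies the Poho\v zaev identity}: $\mathcal P(u_n)\to 0$ with $\mathcal P$ as in \eqref{poho}. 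Combining $\mathcal I(u_n)\to b$, $\mathcal I'(u_n)\to 0$ in $H^{-1}$, and $\mathcal P(u_n)\to0$ algebraically eliminates the nonlinear term and yields a uniform bound on $\|\nabla u_n\|_2$ and $\|u_n\|_2$, i.e. $(u_n)$ is bounded in $H^1(\mathbb R^N)$; boundedness for free is the whole point of adding the Poho\v zaev constraint.

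Then I would run a concentration-compactness / Lions-type argument to recover a non-zero limit. Up to translations $u_n(\cdot - y_n)$, one shows vanishing is impossible: if $\sup_{y}\int_{B_1(y)}|u_n|^2\to0$, then by Lions' lemma $u_n\to0$ in $L^q$ for the relevant intermediate exponents, which (via Hardy--Littlewood--Sobolev and $(F_1)$, resp. Moser--Trudinger and $(F_1')$ in the plane, exploiting $0<\alpha<N$ so that the convolution term is genuinely subcritical) forces $\int(I_\alpha*F(u_n))F(u_n)\to0$, hence $\mathcal I(u_n)\to0$, contradicting $b>0$. So, after translating, $u_n\wk u$ weakly in $H^1$ with $u\ne0$. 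Passing to the limit in $\mathcal I'(u_n)=o(1)$ — the delicate point being the continuity of the Choquard nonlinearity $v\mapsto (I_\alpha*F(v))F'(v)$ under weak convergence, handled by a Brezis--Lieb--type splitting together with the growth bounds and compact Sobolev embedding on bounded domains — shows $\mathcal I'(u)=0$, so $u\in H^1(\mathbb R^N)\setminus\{0\}$ solves \eqref{choq}.

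The main obstacle I expect is precisely this last passage to the limit under weak convergence: the convolution kernel $I_\alpha$ is non-local, so one cannot localize the nonlinear term directly, and in the borderline cases (the upper critical exponent $\frac{N+\alpha}{N-2}$ for $N\ge3$, the exponential growth for $N=2$) one is working at the edge of the available embeddings. The dimension-two case is genuinely different: there Sobolev embeddings must be replaced throughout by the Moser--Trudinger inequality \eqref{mt}, one must control exponential integrals $\int e^{\theta u_n^2}$ uniformly (which requires keeping $\theta\|\nabla u_n\|_2^2$ below $4\pi$, hence a careful quantitative use of the energy bound), and the lack of a critical exponent means $(F_2')$ only needs to be imposed at the origin. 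A secondary technical point is verifying rigorously that the scaling-trick machinery of \cite{jea} applies in the present non-local setting, i.e. that $\widetilde{\mathcal I}$ has the same mountain-pass level $b$ and that the extra Poho\v zaev-type information survives in the limit; this is where the scaling properties of $I_\alpha$ noted in the introduction actually work \emph{for} us rather than against us.
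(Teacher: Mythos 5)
Your proposal follows essentially the same route as the paper: mountain-pass geometry from $(F_0)$, $(F_1)$, $(F_2)$ (resp.\ the primed hypotheses with Moser--Trudinger for $N=2$), Jeanjean's scaling trick via $\widetilde{\mathcal I}(\sigma,v)=\mathcal I\left(v\left(e^{-\sigma}\cdot\right)\right)$ to produce a Poho\v zaev--Palais--Smale sequence at level $b$, boundedness from combining $\mathcal I(u_n)$ and $\mathcal P(u_n)$, a Lions-type non-vanishing argument, and passage to the weak limit after translation. The only cosmetic difference is that you phrase non-vanishing as a contradiction (where the step ``nonlinear term $\to0$ hence $\mathcal I(u_n)\to0$'' tacitly re-uses $\mathcal P(u_n)\to0$ to kill the quadratic part), whereas the paper proves the lower bound $\sup_{x}\int_{B_1(x)}|u_n|^p\ge\frac1C$ directly; the content is the same.
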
\

We start by showing the existence of a Poho\v zaev-Palais-Smale sequence. We argue as in \cite{jea} to get the asymptotical Poho\v zaev identity.

\begin{lemma}\label{constr}${}$\\
Assume $F$ satisfies $(F_0),(F_1)$ (or, in case $N=2$, $(F_0),(F_1')$). Then, there exists a sequence $(u_n)_{n\in\mathbb N}$ in $H^1\left(\mathbb R^N\right)$ such that:
$$\mathcal I(u_n)\underset{n\to+\infty}\to b\quad\quad\quad\quad\mathcal I'(u_n)\underset{n\to+\infty}\to0\text{ in }{H^1\left(\mathbb R^N\right)}'\quad\quad\quad\quad\mathcal P(u_n)\underset{n\to+\infty}\to0$$
\end{lemma}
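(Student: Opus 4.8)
The plan is to realize the Poho\v zaev--Palais--Smale sequence via Jeanjean's scaling trick, i.e.\ by working on the augmented functional on the larger space $\R\times H^1(\R^N)$. Concretely, for $u\in H^1(\R^N)$ and $s\in\R$ write the dilation $u_s(x):=u(e^{-s}x)$ and define
\[
\wt{\mcal I}(s,u):=\mcal I(u_s)=\fr{e^{(N-2)s}}2\int_{\R^N}|\n u|^2+\fr{e^{Ns}}2\int_{\R^N}|u|^2-\fr{e^{(N+\a)s}}2\int_{\R^N}(I_\a*F(u))F(u).
\]
A direct computation shows $\wt{\mcal I}\in C^1(\R\times H^1(\R^N))$, with $\pa_s\wt{\mcal I}(s,u)=\mcal P(u_s)$ and with the $u$-derivative expressible back in terms of $\mcal I'(u_s)$ after undoing the dilation. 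First I would check that $\wt{\mcal I}$ has a mountain-pass geometry on $\R\times H^1(\R^N)$ with exactly the same mountain-pass level $b$: the path space $\wt\G$ of curves from $(0,0)$ to a point with negative energy contains the obvious lift $t\mapsto(0,\g(t))$ of any $\g\in\G$, giving $\wt b\le b$; conversely, from any augmented path one recovers an ordinary path $t\mapsto\g(t)_{s(t)}$ at no higher cost, so $\wt b=b$. The geometry itself (a strict local minimum at the origin, and a point of negative energy reachable by a path) follows from $(F_1)$ (or $(F_1')$) together with Hardy--Littlewood--Sobolev, Sobolev in the case $N\ge3$ and the Moser--Trudinger inequality \eqref{mt} in the case $N=2$ — this is where $(F_0)$ enters, to guarantee that the nonlinear term is genuinely present and large dilations push the energy to $-\infty$ along a suitable $u$.

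Next I would apply the Ekeland variational principle / a quantitative deformation lemma to $\wt{\mcal I}$ on $\R\times H^1(\R^N)$ (with the product metric) to produce a sequence $(s_n,v_n)$ with $\wt{\mcal I}(s_n,v_n)\to b$, $\wt{\mcal I}'(s_n,v_n)\to0$ in the dual of $\R\times H^1(\R^N)$. The second condition unpacks into two pieces: $\pa_s\wt{\mcal I}(s_n,v_n)\to0$, i.e.\ $\mcal P((v_n)_{s_n})\to0$, and $\pa_u\wt{\mcal I}(s_n,v_n)\to0$ in $(H^1)'$, which after the change of variables $u_n:=(v_n)_{s_n}$ and using that dilation is an isomorphism of $H^1(\R^N)$ (with operator norm controlled by $e^{|s_n|}$ in each direction) translates into $\mcal I'(u_n)\to0$ in $(H^1)'$. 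Since $\wt{\mcal I}(s_n,v_n)=\mcal I(u_n)$ by construction, we also get $\mcal I(u_n)\to b$, and $\mcal P(u_n)=\mcal P((v_n)_{s_n})\to0$. That yields exactly the three asserted convergences.

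The one technical point requiring care — the step I expect to be the main obstacle — is controlling the dilation factors $e^{s_n}$: to pass from $\pa_u\wt{\mcal I}(s_n,v_n)\to0$ to $\mcal I'(u_n)\to0$ one needs $(s_n)$ to stay bounded (or at least not to blow up in the bad direction), since the norm of the rescaling map on $H^1$ degenerates as $|s_n|\to\infty$. The standard fix, which I would follow, is to not take an arbitrary Ekeland sequence but to extract it from an \emph{almost minimizing sequence of augmented paths} $\wt\g_n\in\wt\G$ with $\max\wt{\mcal I}\c\wt\g_n\le b+1/n$; because the ordinary mountain-pass geometry forces any near-optimal path to pass through a region where $\|u\|_{H^1}$ is bounded away from $0$ and above and where $\mcal I$ is close to $b$, the deformation argument of Jeanjean produces the Poho\v zaev--Palais--Smale sequence inside such a bounded region, so the corresponding $s_n$ are automatically bounded. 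Everything else — the $C^1$-regularity of $\wt{\mcal I}$, the explicit formulas for its partial derivatives, the identification $\wt b=b$, and the Moser--Trudinger estimates needed in dimension two — is routine given $(F_1)$, $(F_1')$ and the inequalities quoted above.
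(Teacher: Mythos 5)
Your proposal is correct and follows essentially the same route as the paper: the augmented functional $\widetilde{\mathcal I}(\sigma,v)=\mathcal I\left(v\left(e^{-\sigma}\cdot\right)\right)$ \`a la Jeanjean, verification of the mountain-pass geometry and of the identification of the augmented level with $b$, application of the standard min-max principle, and translation of the resulting sequence back via $u_n=(v_n)_{\sigma_n}$ so that $\partial_\sigma\widetilde{\mathcal I}\to0$ becomes $\mathcal P(u_n)\to0$. Your discussion of keeping the dilation parameters $\sigma_n$ bounded is a sound elaboration of a detail the paper subsumes in its citation of the standard min-max principle (where $\sigma_n$ can in fact be taken close to $0$ since the comparison paths have first component zero).
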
\

\begin{proof}${}$\\
We divide the proof in three steps: first we show that the mountain-pass level \eqref{b} is not degenerate and then we apply a variant of the mountain-pass principle.
\begin{itemize}
\item[Step 1: $b>0$] We suffice to show that $\Gamma\ne\emptyset$, namely that there exists some $u_0\in H^1\left(\mathbb R^N\right)$ with $\mathcal I(u_0)<0$.\\
By $(F_0)$, we can choose $s_0$ such that $F(s_0)\ne0$, therefore if we take a smooth $v_0$ approximating $s_0\mathbf1_{B_1}$ we easily get $\int_{\mathbb R^N}(I_\alpha*F(v_0))F(v_0)>0$. If now we consider $v_t=v_0\left(\frac{\cdot}t\right)$, we get
\begin{equation}\label{scale}
\mathcal I(v_t)=\frac{t^{N-2}}2\int_{\mathbb R^N}|\nabla v_0|^2+\frac{t^N}2\int_{\mathbb R^N}|v_0|^2-\frac{t^{N+\alpha}}2\int_{\mathbb R^N}(I_\alpha*F(v_0))F(v_0),
\end{equation}
which is negative for large $t$, so we can take $u_0=v_t$ with $t\gg1$.
\item[Step 2: $b<+\infty$] We need to show that for any $\gamma\in\Gamma$ there exists $t_\gamma$ such that $\mathcal I(\gamma(t_\gamma))\ge\varepsilon>0$.\\
If $\int_{\mathbb R^N}\left(|\nabla u|^2+|u|^2\right)\le\delta\ll1$, then by assumption $(F_2)$ and H-L-S and Sobolev's inequality we get
$$\int_{\mathbb R^N}(I_\alpha*F(u))F(u)\le C\left(\left(\int_{\mathbb R^N}|\nabla u|^2\right)^\frac{N+\alpha}{N-2}+\left(\int_{\mathbb R^N}|u|^2\right)^{1+\frac{\alpha}N}\right)\le\frac{1}4\int_{\mathbb R^N}\left(|\nabla u|^2+|u|^2\right),$$
which means $\mathcal I(u)\ge\frac{1}4\int_{\mathbb R^N}\left(|\nabla u|^2+|u|^2\right)$, and the same can be proved similarly when $N=2$.\\
Now, for any fixed $\gamma\in\Gamma$ we can take $t_\gamma$ such that $\int_{\mathbb R^2}\left(|\nabla\gamma(t_\gamma)|^2+|\gamma(t_\gamma)|^2\right)=\delta$ and we get $\mathcal I(\gamma(t_\gamma))\ge\frac{\delta}4=:\varepsilon$.
\item[Step 3: Conclusion] Consider the functional $\widetilde{\mathcal I}:\mathbb R\times H^1\left(\mathbb R^N\right)\to\mathbb R$ defined by $$\widetilde{\mathcal I}(\sigma,v):=\mathcal I\left(v\left(e^{-\sigma}\cdot\right)\right)=\frac{e^{(N-2)\sigma}}2\int_{\mathbb R^N}|\nabla v|^2+\frac{e^{N\sigma}}2\int_{\mathbb R^N}|v|^2-\frac{e^{(N+\alpha)\sigma}}2\int_{\mathbb R^N}(I_\alpha*F(v))F(v).$$
By applying to $\widetilde{\mathcal I}$ the standard min-max principle (see \cite{wil} for instance) we get a sequence $(\sigma_n,v_n)_{n\in\mathbb N}$ with $\widetilde{\mathcal I}(\sigma_n,v_n)\underset{n\to+\infty}\to b$ and ${\widetilde{\mathcal I}(\sigma_n,v_n)}'\underset{n\to+\infty}\to0$, which is equivalent to what the Lemma required.
\end{itemize}
\end{proof}\

To prove Theorem \ref{mp} we need to show the convergence of the Poho\v zaev-Palais-Smale sequence we just found. Here we need the sub-criticality assumption $(F_2),(F_2')$
\begin{lemma}\label{conv}${}$\\
Assume $F$ satisfies $(F_1),(F_2)$ (or, in case $N=2$, $(F_1'),(F_2')$) and $(u_n)_{n\in\mathbb N}$ satisfies
$$\mathcal I(u_n)\text{ is bounded}\quad\quad\quad\quad\mathcal I'(u_n)\underset{n\to+\infty}\to0\text{ in }{H^1\left(\mathbb R^N\right)}'\quad\quad\quad\quad\mathcal P(u_n)\underset{n\to+\infty}\to0.$$
Then, up to subsequences,
\begin{itemize}
\item either $u_n\underset{n\to+\infty}\to0$ strongly in $H^1\left(\mathbb R^N\right)$
\item or $u_n(\cdot-x_n)\underset{n\to+\infty}\rightharpoonup u$ weakly for some $(x_n)_{n\in\mathbb N}$ in $\mathbb R^N$ and $u\in H^1\left(\mathbb R^N\right)\setminus\{0\}$.
\end{itemize}
\end{lemma}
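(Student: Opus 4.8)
The plan is to first prove that $(u_n)$ is bounded in $H^1(\mathbb R^N)$, and then to run Lions' concentration--compactness dichotomy, distinguishing whether the $L^2$-mass of $(u_n)$ vanishes or not.

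\emph{Step 1: boundedness.} Only the bounds on $\mathcal I(u_n)$ and $\mathcal P(u_n)$ are used here. Writing $A_n=\int_{\mathbb R^N}|\nabla u_n|^2$, $B_n=\int_{\mathbb R^N}|u_n|^2$ and $D_n=\int_{\mathbb R^N}(I_\alpha*F(u_n))F(u_n)$, so that $2\mathcal I(u_n)=A_n+B_n-D_n$, one computes, for $N\ge3$,
$$2\mathcal I(u_n)-\frac2{N+\alpha}\mathcal P(u_n)=\frac{\alpha+2}{N+\alpha}A_n+\frac\alpha{N+\alpha}B_n,$$
and, when $N=2$, $2\mathcal I(u_n)-\frac2{2+\alpha}\mathcal P(u_n)=A_n+\frac\alpha{2+\alpha}B_n$. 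The left-hand sides are bounded since $\mathcal I(u_n)$ is bounded and $\mathcal P(u_n)\to0$, while all the coefficients on the right are strictly positive because $\alpha\in(0,N)$; hence $(A_n)$ and $(B_n)$ are bounded, i.e.\ $(u_n)$ is bounded in $H^1(\mathbb R^N)$.

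\emph{Step 2: the dichotomy and the non-vanishing case.} Passing to a subsequence, set $\mu:=\lim_n\sup_{y\in\mathbb R^N}\int_{B_1(y)}|u_n|^2$. If $\mu>0$, choose $x_n\in\mathbb R^N$ with $\int_{B_1(x_n)}|u_n|^2\ge\mu/2$ for $n$ large and put $\widetilde u_n:=u_n(\cdot-x_n)$, still bounded in $H^1(\mathbb R^N)$; along a further subsequence $\widetilde u_n\rightharpoonup u$ in $H^1(\mathbb R^N)$, and by Rellich--Kondrachov $\widetilde u_n\to u$ strongly in $L^2(B_1)$, so $\int_{B_1}|u|^2=\lim_n\int_{B_1(x_n)}|u_n|^2\ge\mu/2>0$ and $u\ne0$: this is the second alternative of the statement.

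\emph{Step 3: the vanishing case.} If $\mu=0$, Lions' vanishing lemma (as in \cite{wil}) gives $u_n\to0$ strongly in $L^q(\mathbb R^N)$ for every $q\in(2,2^\ast)$ with $2^\ast=\frac{2N}{N-2}$ (every $q\in(2,+\infty)$ if $N=2$). The key claim is that then $\int_{\mathbb R^N}(I_\alpha*F(u_n))F(u_n)\to0$ and $\int_{\mathbb R^N}(I_\alpha*F(u_n))F'(u_n)u_n\to0$. For $N\ge3$ the Hardy--Littlewood--Sobolev inequality bounds these by $C\|F(u_n)\|_{L^{2N/(N+\alpha)}}^2$ and $C\|F(u_n)\|_{L^{2N/(N+\alpha)}}\|F'(u_n)u_n\|_{L^{2N/(N+\alpha)}}$; by $(F_1)$ and $(F_2)$, for every $\e>0$ one has $|F(s)|\le\e(|s|^{1+\alpha/N}+|s|^{(N+\alpha)/(N-2)})+C_\e|s|^r$ with a fixed $r$ such that $\frac{2Nr}{N+\alpha}\in(2,2^\ast)$, hence $\|F(u_n)\|_{L^{2N/(N+\alpha)}}\le\e\,C(\|u_n\|_{L^2}^{1+\alpha/N}+\|u_n\|_{L^{2^\ast}}^{(N+\alpha)/(N-2)})+C_\e\|u_n\|_{L^{2Nr/(N+\alpha)}}^r$, whose first term is $\le\e C'$ by Sobolev and Step 1 and whose second tends to $0$ by the $L^q$-convergence; thus $\|F(u_n)\|_{L^{2N/(N+\alpha)}}\to0$, while $\|F'(u_n)u_n\|_{L^{2N/(N+\alpha)}}$ stays bounded (again by $(F_1)$), so both nonlocal integrals vanish. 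For $N=2$ one argues analogously after splitting $\mathbb R^2=\{|u_n|\le1\}\cup\{|u_n|>1\}$: on $\{|u_n|\le1\}$, $(F_1')$ and $(F_2')$ give $|F(u_n)|\le\e|u_n|^{1+\alpha/2}+C_\e|u_n|^r$ with $\frac{4r}{2+\alpha}>2$, which tends to $0$; on $\{|u_n|>1\}$ the exponential bound in $(F_1')$, combined via Hölder's inequality with the Moser--Trudinger inequality \eqref{mt} (applicable since $\int_{\mathbb R^2}|\nabla u_n|^2$ is bounded by Step 1, taking $\theta$ small enough), controls the exponential factor by a constant while the remaining polynomial factor tends to $0$ because $|\{|u_n|>1\}|\le\|u_n\|_{L^q}^q\to0$. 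Finally, since $\mathcal I'(u_n)\to0$ in ${H^1(\mathbb R^N)}'$ and $(u_n)$ is bounded, $\mathcal I'(u_n)[u_n]\to0$, i.e.\
$$\int_{\mathbb R^N}\big(|\nabla u_n|^2+|u_n|^2\big)=\mathcal I'(u_n)[u_n]+\int_{\mathbb R^N}(I_\alpha*F(u_n))F'(u_n)u_n\longrightarrow0,$$
so $u_n\to0$ strongly in $H^1(\mathbb R^N)$: this is the first alternative.

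\emph{Expected main obstacle.} The delicate point is Step 3 — forcing the \emph{entire} nonlocal energy to vanish out of the vanishing of the $L^2$-mass — and this is exactly where the sub-criticality hypotheses $(F_2)$, $(F_2')$ are used. The dimension-two case is genuinely harder, because the naive estimate only yields boundedness of $\|F(u_n)\|_{L^{2N/(N+\alpha)}}$, so one must combine the $\{|u_n|\le1\}/\{|u_n|>1\}$ decomposition with the Moser--Trudinger inequality \eqref{mt} and Hölder's inequality to recover the needed smallness.
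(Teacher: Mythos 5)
Your argument is correct, and it follows the same concentration--compactness spirit as the paper, but the key step is organized differently. The paper never treats the vanishing case head-on: it assumes the strong-convergence alternative fails, uses $\mathcal P(u_n)\to0$ to infer $\inf_n\int_{\mathbb R^N}(I_\alpha*F(u_n))F(u_n)>0$, and then combines the splitting of $F$ from $(F_2)$, the Hardy--Littlewood--Sobolev inequality (used ``in reverse'', to bound the nonlocal term by $\|F(u_n)\|_{L^{2N/(N+\alpha)}}^2$) and the quantitative interpolation inequality of Lions, $\int_{\mathbb R^N}|u_n|^p\le C\|u_n\|_{H^1}^2\bigl(\sup_x\int_{B_1(x)}|u_n|^p\bigr)^{1-2/p}$, to get a uniform lower bound on $\sup_x\int_{B_1(x)}|u_n|^p$ directly, after which translation and weak compactness finish. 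You instead run the classical vanishing/non-vanishing dichotomy on the local $L^2$-mass: in the vanishing case you invoke Lions' vanishing lemma, prove that \emph{both} nonlocal terms tend to zero (via HLS, the $(F_2)$/$(F_2')$ splitting, and in dimension two the $\{|u_n|\le1\}/\{|u_n|>1\}$ decomposition with Moser--Trudinger and a small $\theta$), and then use $\mathcal I'(u_n)[u_n]\to0$ — rather than the Poho\v zaev identity — to conclude strong convergence to $0$. Your route costs more work in the vanishing case (especially in $\mathbb R^2$, where you must renormalize the gradient before applying \eqref{mt}, which your ``$\theta$ small enough'' remark correctly anticipates) but is more informative, since it identifies exactly when each alternative occurs; the paper's route is shorter because the Poho\v zaev information replaces the whole vanishing analysis. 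Two minor points: with your choice of $x_n$ you should translate by $u_n(\cdot+x_n)$ (or pick $-x_n$) for the mass to sit in $B_1(0)$ — a purely notational slip; and note that the paper's proof also records that the weak limit solves \eqref{choq} (used later in the proof of Theorem \ref{mp}), a conclusion not required by the lemma as stated and not addressed in your write-up.
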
\

\begin{proof}${}$\\
Assume the first alternative does not occur. Then, we show it weakly converges to some $u\not\equiv0$.\\
\begin{itemize}
\item[Step 1: $(u_n)_{n\in\mathbb N}$ is bounded] It follows by just writing
$$\frac{\alpha+2}{2(N+\alpha)}\int_{\mathbb R^N}|\nabla u_n|^2+\frac{\alpha}{2(N+\alpha)}\int_{\mathbb R^N}|u_n|^2=\mathcal I(u_n)-\frac{\mathcal P(u_n)}{N+\alpha}\underset{n\to+\infty}\to b$$
\item[Step 2: $\sup_{x\in\mathbb R^N}\int_{B_1(x)}|u_n|^p\ge\frac{1}C$] By using the asymptotic Poho\v zaev identity it is not hard to see that $\inf_n\int_{\mathbb R^N}(I_\alpha*F(u_n))F(u_n)>0$. Moreover, $(F_2)$ implies, for any $\epsilon>0,p\in\left(2,\frac{2N}{N-2}\right)$,
$$|F(s)|^\frac{2N}{N+\alpha}\le\varepsilon\left(|s|^2+|s|^\frac{2N}{N-2}\right)+C_\varepsilon|s|^p,$$
therefore, by the following inequality from \cite{lio}
$$\int_{\mathbb R^N}|u_n|^p\le C\left(\int_{\mathbb R^N}\left(|\nabla u_n|^2+|u_n|^2\right)\right)\left(\sup_{x\in\mathbb R^N}\int_{B_1(x)}|u_n|^p\right)^{1-\frac{2}p},$$
we get
\begin{eqnarray*}
&&\left(\sup_{x\in\mathbb R^N}\int_{B_1(x)}|u_n|^p\right)^{1-\frac{2}p}\ge\frac{1}C\frac{\int_{\mathbb R^N}|u_n|^p}{\int_{\mathbb R^N}\left(|\nabla u_n|^2+|u_n|^2\right)}\\
&\ge&\frac{1}{C_\varepsilon}\left(\int_{\mathbb R^N}|F(u_n)|^\frac{2N}{N+\alpha}-\varepsilon\int_{\mathbb R^N}\left(|u|^2+|u|^\frac{2N}{N-2}\right)\right)\\
&\ge&\frac{1}{C'_\varepsilon}\left(\left(\int_{\mathbb R^N}(I_\alpha*F(u_n))F(u_n)\right)^\frac{N}{N+\alpha}-C\varepsilon\int_{\mathbb R^N}\left(|\nabla u_n|^2+|u_n|^2\right)\right)\ge\frac{1}C.
\end{eqnarray*}
and a similar estimate holds true in the case $N=2$.
\item[Step 3: $u_n(\cdot-x_n)$ converges] We choose $x_n$ such that $\liminf_{n\to+\infty}\int_{B_1}|u_n(\cdot-x_n)|^p>0$, its weak limit (which exists because Step $1$ ensures boundedness) must be some $u\not\equiv0$.\\
By Sobolev embeddings, one can show that $(I_\alpha*F(u_n))F'(u_n)\underset{n\to+\infty}\rightharpoonup (I_\alpha*F(u))F'(u)$ in $L^p_{\mathrm{loc}}\left(\mathbb R^N\right)$. This easily yields that $u$ solves \eqref{choq}
\end{itemize}
\end{proof}\

\begin{proof}[Proof of Theorem \ref{mp}]${}$\\
By Lemma \ref{constr}, $\mathcal I$ admits a Poho\v zaev-Palais-Smale sequence $(u_n)_{n\in\mathbb N}$ at the energy level $b$. We apply Lemma \ref{conv} to the latter sequence: if the first alternative occurred, then we would have $\mathcal I(u_n)\underset{n\to+\infty}\to\mathcal I(0)=0$, contradicting Lemma \ref{conv}. Therefore, the second alternative must occur and in particular $u\not\equiv0$ solves \eqref{choq}.
\end{proof}\

We conclude this section by showing that Theorem \ref{mp} is actually sharp in the case of a power nonlinearity $F(u)=\frac{|u|^p}p$; in other words, we give a non-existence result for all the values $p$ not matching the assumptions of Theorem \ref{mp}. To show non-existence, we use a Poho\v zaev identity, which is a classical property of solutions of \eqref{choq}.

\begin{proposition}${}$\\
Any solution $u$ of \eqref{choq} satisfies the Poho\v zaev identity \eqref{poho}.
\end{proposition}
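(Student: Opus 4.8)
The Poho\v zaev identity expresses the fact that a solution of \eqref{choq} is stationary for $\mathcal I$ also under the dilations $u_\lambda(x):=u(x/\lambda)$: exactly as in \eqref{scale},
\[
\mathcal I(u_\lambda)=\frac{\lambda^{N-2}}2\int_{\mathbb R^N}|\nabla u|^2+\frac{\lambda^N}2\int_{\mathbb R^N}|u|^2-\frac{\lambda^{N+\alpha}}2\int_{\mathbb R^N}(I_\alpha*F(u))F(u),
\]
so that formally $\mathcal P(u)=\frac{d}{d\lambda}\mathcal I(u_\lambda)\big|_{\lambda=1}=\langle\mathcal I'(u),-x\cdot\nabla u\rangle=0$. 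The plan is to make this rigorous by testing \eqref{choq} against a truncated dilation field. First I would collect the regularity and decay making every integral in \eqref{poho} absolutely convergent and legitimising the integrations by parts below: rewriting \eqref{choq} as $-\Delta u=(I_\alpha*F(u))F'(u)-u$ and combining $(F_1)$ (resp.\ $(F_1')$ if $N=2$), the Hardy--Littlewood--Sobolev inequality and interior elliptic $L^q$ estimates, one bootstraps to $u\in W^{2,q}_{\mathrm{loc}}(\mathbb R^N)$ for a suitable $q$, with $F(u)\in L^{2N/(N+\alpha)}(\mathbb R^N)$, $I_\alpha*F(u)\in L^{2N/(N-\alpha)}(\mathbb R^N)$ and $\nabla u\in L^2(\mathbb R^N)$.

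Next, fix $\varphi\in C_c^\infty(\mathbb R^N)$ with $\varphi\equiv1$ on $B_1$, set $\varphi_\lambda(x):=\varphi(x/\lambda)$, and test \eqref{choq} with $\varphi_\lambda\,(x\cdot\nabla u)$, integrating by parts term by term. For $-\Delta u$, using $\nabla u\cdot\nabla(x\cdot\nabla u)=|\nabla u|^2+\tfrac12\,x\cdot\nabla|\nabla u|^2$ and one further integration by parts gives $-\tfrac{N-2}2\int_{\mathbb R^N}\varphi_\lambda|\nabla u|^2$ plus terms carrying $\nabla\varphi_\lambda$; for the linear term, $u\,(x\cdot\nabla u)=\tfrac12\,x\cdot\nabla(u^2)$ gives $-\tfrac N2\int_{\mathbb R^N}\varphi_\lambda|u|^2$ plus a $\nabla\varphi_\lambda$-term. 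For the nonlocal term I would write $F'(u)\,(x\cdot\nabla u)=x\cdot\nabla(F(u))$ and exploit the homogeneity of the Riesz kernel, namely $x\cdot\nabla_x|x-y|^{-(N-\alpha)}+y\cdot\nabla_y|x-y|^{-(N-\alpha)}=-(N-\alpha)|x-y|^{-(N-\alpha)}$: writing this term as a double integral, integrating by parts in $x$ and symmetrising in $x\leftrightarrow y$ produces $-\tfrac{N+\alpha}2\int_{\mathbb R^N}\varphi_\lambda\,(I_\alpha*F(u))F(u)$ plus $\nabla\varphi_\lambda$-terms, the coefficient $-\tfrac{N+\alpha}2$ being exactly what the scaling $\int(I_\alpha*F(u_\lambda))F(u_\lambda)=\lambda^{N+\alpha}\int(I_\alpha*F(u))F(u)$ predicts.

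Finally I would let $\lambda\to\infty$. The three main terms converge to the three summands of $\mathcal P(u)$ by dominated convergence, using the integrability secured above; the remaining error terms all contain $\nabla\varphi_\lambda$, hence are supported on $\{\lambda\le|x|\le C\lambda\}$ with $|x|\,|\nabla\varphi_\lambda|\le C$, so the local ones are bounded by $C\int_{|x|\ge\lambda}(|\nabla u|^2+|u|^2)\to0$ since $u\in H^1(\mathbb R^N)$, while the nonlocal one is controlled by a Hardy--Littlewood--Sobolev estimate over $\{|x|\ge\lambda\}$ that vanishes as well. The case $N=2$ is identical, with the Moser--Trudinger inequality \eqref{mt} replacing Sobolev's. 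The delicate points, on which I would concentrate the effort, are establishing enough regularity and decay to justify the term-by-term integration by parts, and showing that the nonlocal error term really does vanish as $\lambda\to\infty$.
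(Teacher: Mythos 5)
The paper itself does not prove this Proposition: it is quoted as a classical property of solutions of \eqref{choq}, with the actual proofs deferred to \cite{mv13} (for $N\ge3$) and \cite{bv} (for $N=2$). Your outline reproduces exactly the strategy of those references: establish $W^{2,q}_{\mathrm{loc}}$ regularity and global integrability by bootstrapping with Hardy--Littlewood--Sobolev and elliptic estimates, test the equation against the truncated dilation field $\varphi_\lambda\,(x\cdot\nabla u)$, use the homogeneity of the Riesz kernel after symmetrising in $x\leftrightarrow y$ to produce the coefficient $\frac{N+\alpha}2$, and send $\lambda\to\infty$. Your bookkeeping of the three coefficients $\frac{N-2}2$, $\frac N2$, $\frac{N+\alpha}2$ is correct, and the heuristic $\mathcal P(u)=\frac{d}{d\lambda}\mathcal I(u_\lambda)\big|_{\lambda=1}$ is the right guiding principle, consistent with \eqref{scale}.

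That said, what you have written is a plan rather than a proof, and the two points you yourself flag as delicate are precisely where the work lies. First, the regularity bootstrap is not routine here: $(F_1)$ allows the critical growth $|s|^{\frac{\alpha+2}{N-2}}$ and $(F_1')$ allows exponential growth, so obtaining $u\in W^{2,q}_{\mathrm{loc}}$ together with the global integrability of $F(u)$ and $I_\alpha*F(u)$ requires a genuine argument (this occupies a substantial part of \cite{mv13}). Second, in the nonlocal term the cutoff breaks the symmetry: after symmetrising you are left not only with $-(N-\alpha)I_\alpha(x-y)$ weighted by $\varphi_\lambda$, but also with a commutator term carrying $\varphi_\lambda(x)-\varphi_\lambda(y)$, which is not supported in $\{|x|\ge\lambda\}$ alone and cannot be dismissed by a Hardy--Littlewood--Sobolev estimate over that region; it needs a separate kernel estimate exploiting $|\varphi_\lambda(x)-\varphi_\lambda(y)|\lesssim\min\{1,|x-y|/\lambda\}$. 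Neither issue invalidates the approach --- it is the classical one and it does go through --- but as submitted the argument has these two gaps to fill before it is complete.
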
\

\begin{theorem}${}$\\
If $F(u)=\frac{|u|^p}p$ then problem \eqref{choq} admits a non-trivial solution if and only if $p\in\left(1+\frac{\alpha}N,\frac{N+\alpha}{N-2}\right)$, with the latter condition to be read as $p>1+\frac{\alpha}2$ if $N=2$.
\end{theorem}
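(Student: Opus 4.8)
The plan is to prove the two implications separately: the ``if'' part is an immediate specialization of Theorem \ref{mp}, while the ``only if'' part comes from combining the Poho\v zaev identity (the Proposition just stated) with the Nehari-type identity obtained by testing the equation against the solution.

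\textbf{The ``if'' direction.} First I would verify that $F(s)=|s|^p/p$ satisfies the hypotheses of Theorem \ref{mp} exactly when $p\in\left(1+\frac{\alpha}N,\frac{N+\alpha}{N-2}\right)$. Here $(F_0)$ is automatic, since $F(1)=1/p\ne0$. For $N\ge3$, since $F'(s)=|s|^{p-2}s$, the bound $(F_1)$ reads $|s|^{p-1}\le C\left(|s|^{\alpha/N}+|s|^{(\alpha+2)/(N-2)}\right)$ for all $s>0$, which holds precisely when $\frac{\alpha}N\le p-1\le\frac{\alpha+2}{N-2}$ (inspect $s\to0$ and $s\to+\infty$), i.e.\ on the \emph{closed} interval; and $(F_2)$, namely $|s|^{p-1-\alpha/N}\to0$ as $s\to0$ and $|s|^{p-(N+\alpha)/(N-2)}\to0$ as $s\to+\infty$, holds precisely on the \emph{open} interval, which is thus the binding condition (and automatically forces $(F_1)$). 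For $N=2$ one checks likewise that $(F_1')$ holds iff $p\ge1+\frac{\alpha}2$ — the exponential factor absorbs all polynomial growth at infinity, and near $0$ one needs $p-1\ge\frac{\alpha}2$ — while $(F_2')$ holds iff $p>1+\frac{\alpha}2$; since $1+\frac{\alpha}N=1+\frac{\alpha}2$ when $N=2$, this matches the stated reading of the interval. Hence for $p$ in the claimed range Theorem \ref{mp} produces a non-trivial solution.

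\textbf{The ``only if'' direction.} Conversely, let $u\in H^1\left(\mathbb R^N\right)\setminus\{0\}$ solve \eqref{choq}, and set $A:=\int_{\mathbb R^N}|\nabla u|^2$, $B:=\int_{\mathbb R^N}|u|^2$, $D:=\int_{\mathbb R^N}(I_\alpha*F(u))F(u)=\frac1{p^2}\int_{\mathbb R^N}(I_\alpha*|u|^p)|u|^p$. Then $B>0$ trivially, $A>0$ (otherwise $u$ would be a constant in $H^1$, hence $0$), and $D>0$ because $I_\alpha>0$ and $F(u)=|u|^p/p\ge0$ is not identically zero. Testing \eqref{choq} against $u$ and using $F'(s)s=|s|^p=pF(s)$ yields $A+B=pD$, while the Proposition gives $\frac{N-2}2A+\frac N2B=\frac{N+\alpha}2D$. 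Eliminating $D$ between these two relations gives
$$\bigl(p(N-2)-(N+\alpha)\bigr)A+\bigl(pN-(N+\alpha)\bigr)B=0.$$
Since $A,B>0$ the two coefficients must have strictly opposite signs, and as $pN>p(N-2)$ the only option is $p(N-2)-(N+\alpha)<0<pN-(N+\alpha)$, i.e.\ $1+\frac{\alpha}N<p<\frac{N+\alpha}{N-2}$. At either endpoint one coefficient vanishes and the other does not, forcing $A=0$ or $B=0$, a contradiction; outside the closed interval both coefficients share a sign, again impossible. When $N=2$ the term $\frac{N-2}2A$ disappears and the same computation gives $A=\left(p-1-\frac{\alpha}2\right)D$, which forces $p>1+\frac{\alpha}2$ with no upper restriction. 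This establishes non-existence for all $p$ outside the interval.

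\textbf{Main obstacle.} The algebra above is short; the delicate point is justifying the two identities for a solution whose existence is merely assumed, in particular for exponents $p$ at or beyond the ``natural'' Sobolev range, where $\mathcal I$ need not even be finite on all of $H^1\left(\mathbb R^N\right)$. One must invoke the elliptic regularity and decay theory for \eqref{choq} to ensure that any $H^1$ solution is regular and decays fast enough that $(I_\alpha*|u|^p)|u|^p\in L^1\left(\mathbb R^N\right)$ and that both the Nehari testing and the Poho\v zaev identity (the latter exactly the Proposition) are legitimate; granting this — as we do — the rest is the elementary linear-algebra argument above.
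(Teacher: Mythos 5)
Your proposal is correct and follows essentially the same route as the paper: specialize Theorem \ref{mp} for the existence range, and for non-existence combine the Nehari identity (testing against $u$) with the Poho\v zaev identity and take a linear combination to force $u\equiv0$ outside the open interval. If anything, your bookkeeping of the factor $\frac1p$ from $F'(s)s=pF(s)$ is more careful than the paper's displayed identities, and your explicit remark about justifying both identities for supercritical $p$ addresses a point the paper leaves implicit in its unproved Proposition.
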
\

\begin{proof}${}$\\
If $p\in\left(1+\frac{\alpha}N,\frac{N+\alpha}{N-2}\right)$ then one can easily see that $F(u)=\frac{|u|^p}p$ satisfies $(F_0),(F_1),(F_2)$, hence the existence of non-trivial solutions follows from Theorem \ref{mp}.\\
Conversely, assume $p$ is outside that range and $u$ solves \eqref{choq}, By testing both sides of against $u$ we get
$$\int_{\mathbb R^N}\left(|\nabla u|^2+|u|^2\right)=\int_{\mathbb R^N}(I_\alpha*|u|^p)|u|^p.$$
Moreover, $u$ satisfies the Poho\v zaev identity \eqref{poho}, which has the form
$$\frac{N-2}2\int_{\mathbb R^N}|\nabla u|^2+\frac{N}2\int_{\mathbb R^N}|u|^2-\frac{N+\alpha}{2p}\int_{\mathbb R^N}(I_\alpha*|u|^p)|u|^p=0.$$
A linear combination of the two formulas gives
$$\left(\frac{N-2}2-\frac{N+\alpha}{2p}\right)\int_{\mathbb R^N}|\nabla u|^2+\left(\frac{N}2-\frac{N+\alpha}{2p}\right)|u|^2,$$
which implies $u\equiv0$ if $p\le1+\frac{\alpha}N$ or $p\ge\frac{N+\alpha}{N-2}$.
\end{proof}\

\section{From solutions to groundstates}

In the last part of this paper we show that the mountain pass solutions given by Theorem \ref{mp} are actually energy-minimizing, in the sense of \eqref{c}.
\begin{theorem}\label{gs}${}$\\
The mountain-pass solution found in Theorem \ref{mp} is actually a ground state, namely its energy level is given by \eqref{c}.
\end{theorem}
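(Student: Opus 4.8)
The plan is to show the two inequalities $c \le b$ and $b \le c$, where $b$ is the mountain-pass level \eqref{b} and $c$ is the ground-state level \eqref{c}. The inequality $c \le b$ is almost immediate: the mountain-pass solution $u$ produced by Theorem \ref{mp} is a non-trivial solution lying at energy level $b$, so by the very definition of $c$ as an infimum over all non-trivial solutions, $c = \inf\{\mathcal I(v)\} \le \mathcal I(u) = b$. The real content is the reverse inequality $b \le c$, i.e.\ showing that \emph{every} non-trivial solution $v$ of \eqref{choq} has energy at least $b$; equivalently, that we can build an admissible path in $\Gamma$ passing through $v$ whose maximum energy equals $\mathcal I(v)$.

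The key tool is the scaling already used in Lemma \ref{constr}, Step 3: for a solution $v$, set $v_t := v(\cdot/t)$ and consider
$$g(t) := \mathcal I(v_t) = \frac{t^{N-2}}2 \int_{\mathbb R^N}|\nabla v|^2 + \frac{t^N}2 \int_{\mathbb R^N}|v|^2 - \frac{t^{N+\alpha}}2 \int_{\mathbb R^N}(I_\alpha*F(v))F(v).$$
First I would record that since $v$ is a non-trivial solution it satisfies both the Nehari-type identity (testing \eqref{choq} against $v$) and the Poho\v zaev identity \eqref{poho}; combining these forces $\int_{\mathbb R^N}(I_\alpha*F(v))F(v) > 0$ (the quadratic part is strictly positive on $v \ne 0$), so the $t^{N+\alpha}$ coefficient above is strictly negative. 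Hence $g(t) \to -\infty$ as $t \to +\infty$ and $g(0^+) = 0$ (with $g(t) > 0$ for small $t>0$ when $N\ge 3$; when $N=2$ the $t^{N-2}$ term is constant so one checks the sign directly). Next I would show that $t=1$ is the \emph{unique} global maximum of $g$ on $(0,\infty)$: the stationarity $g'(1)=0$ is exactly the statement that the linear combination of the Nehari and Poho\v zaev identities holds, which it does because $v$ is a solution; uniqueness/maximality follows because $g'(t)=0$ reduces to an equation of the form $a(N-2)t^{N-3} + bN t^{N-1} = c(N+\alpha)t^{N+\alpha-1}$ with $a,b,c>0$, whose two sides cross exactly once on $(0,\infty)$ (divide through by $t^{N-3}$ and compare a function with at most one sign change). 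Therefore $\max_{t>0} g(t) = g(1) = \mathcal I(v)$.

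With this in hand, fix $T \gg 1$ so that $g(T) < 0$ and define $\gamma(s) := v_{sT}$ for $s \in (0,1]$, $\gamma(0) := 0$; by continuity of $s \mapsto v_{sT}$ in $H^1(\mathbb R^N)$ (standard, using $(F_1)$ resp.\ $(F_1')$ to control the nonlinear term, exactly as the functional's well-posedness was argued) and $\gamma(0)=0$, $\mathcal I(\gamma(1)) = g(T) < 0$, we have $\gamma \in \Gamma$. Then
$$b \le \sup_{s \in [0,1]} \mathcal I(\gamma(s)) = \sup_{s\in[0,1]} g(sT) \le \max_{t>0} g(t) = \mathcal I(v).$$
Taking the infimum over all non-trivial solutions $v$ gives $b \le c$, and combined with $c \le b$ we conclude $b=c$, so the mountain-pass solution attains the ground-state level \eqref{c}.

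The main obstacle is the rigorous justification that $t=1$ is the \emph{unique maximizer} of $g$ — i.e.\ that no solution can have a strictly "lower" scaling critical point — since the sign analysis of $g'$ must be done carefully in the borderline dimension $N=2$ (where the gradient term does not scale) and must genuinely use that the coefficients are the \emph{specific} integrals attached to a true solution rather than arbitrary positive numbers. A secondary technical point is confirming the $H^1$-continuity of the dilation path up to $s=0$; this is routine but relies on the growth hypotheses and on $v \in H^1$, and in $N=2$ on the Moser--Trudinger bound \eqref{mt}.
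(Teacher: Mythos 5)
Your argument for $N\ge3$ is correct and essentially the paper's own: the dilated path $t\mapsto v(\cdot/t)$, the Poho\v zaev identity to locate the maximum at $t=1$, negativity for large $t$, and the chain $c\le\mathcal I(u)\le b\le c$. (Two small remarks: $g'(1)=0$ is just the Poho\v zaev identity $\mathcal P(v)=0$, no Nehari identity is needed; and you should write $c\le\mathcal I(u)\le b$ rather than $\mathcal I(u)=b$, since at this stage one only knows $\mathcal I(u)\le b$ by lower semicontinuity along the Poho\v zaev--Palais--Smale sequence -- the equality is a consequence, not an input.)

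The genuine gap is the case $N=2$, which you dismiss as a ``routine'' continuity check. It is not a continuity-of-the-nonlinear-term issue at all: since
$$\int_{\mathbb R^2}|\nabla v(\cdot/t)|^2=t^{N-2}\int_{\mathbb R^2}|\nabla v|^2=\int_{\mathbb R^2}|\nabla v|^2,$$
the Dirichlet norm is scaling invariant in dimension two, so $v(\cdot/(sT))\not\to0$ in $H^1(\mathbb R^2)$ as $s\to0^+$. Your path $\gamma$ is therefore discontinuous at $s=0$ and does not belong to $\Gamma$ (which requires $\gamma\in C([0,1],H^1)$ with $\gamma(0)=0$), so the inequality $b\le\sup_s\mathcal I(\gamma(s))$ cannot be invoked; no Moser--Trudinger estimate repairs the path itself. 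The paper fixes this by replacing the dilation for $t\le t_0$ with the amplitude segment $t\mapsto\frac{t}{t_0}v(\cdot/t_0)$, which does tend to $0$ in $H^1$, and then -- this is the nontrivial step you are missing -- verifies that along this inserted segment the energy stays strictly below $\mathcal I(v)$ for $t_0$ small. That verification uses $(F_1')$, the Moser--Trudinger inequality \eqref{mt} and Hardy--Littlewood--Sobolev to bound $\int_{\mathbb R^2}(I_\alpha*F(\overline\gamma_v(t)))F(\overline\gamma_v(t))\le Ct_0^{2+\alpha}\bigl(\int_{\mathbb R^2}|v|^2\bigr)^{1+\frac{\alpha}2}$, and then the Poho\v zaev identity again to compare with $\mathcal I(v)$. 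Without this modification and estimate, your proof of $b\le c$ only covers $N\ge3$.
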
\

The previous Theorem can be easily proved by constructing, for any solution $v$ of \eqref{choq}, a path $\gamma_v\in\Gamma$ which attains its maximum energy on $v$.
\begin{lemma}\label{path}${}$\\
Assume $F$ satisfies $(F_1)$ and $v\in H^1\left(\mathbb R^N\right)\setminus\{0\}$ solves \eqref{choq}. Then, there exists a path $\gamma_v\in\Gamma$ such that:
$$\gamma(0)=0\quad\quad\quad\gamma_v\left(\frac{1}2\right)=v\quad\quad\quad\mathcal I(\gamma_v(t))<\mathcal I(v)\text{ for any }t\ne\frac{1}2\quad\quad\quad\mathcal I(\gamma_v(t))<0$$
\end{lemma}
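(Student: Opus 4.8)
The plan is to build the path by composing the given solution $v$ with the same one-parameter dilation group that appeared in \eqref{scale} and in Step 3 of Lemma \ref{constr}. Concretely, set $w_t(x):=v\left(\frac{x}{t}\right)$ for $t>0$ and $w_0:=0$; then, exactly as in \eqref{scale},
$$\mathcal I(w_t)=\frac{t^{N-2}}2\int_{\mathbb R^N}|\nabla v|^2+\frac{t^N}2\int_{\mathbb R^N}|v|^2-\frac{t^{N+\alpha}}2\int_{\mathbb R^N}(I_\alpha*F(v))F(v)=:g(t).$$
I would first record that, since $v$ solves \eqref{choq}, it satisfies the Poho\v zaev identity \eqref{poho}; together with the equation tested against $v$ this forces $\int_{\mathbb R^N}(I_\alpha*F(v))F(v)>0$ (otherwise the two identities would give $v\equiv0$), so the highest-order term in $g$ carries a strictly negative coefficient and $g(t)\to-\infty$ as $t\to+\infty$. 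Hence $g$ is a smooth function on $[0,\infty)$ with $g(0)=0$ and $g(t)<0$ for $t$ large.

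The key point is that $t=1$ is the unique global maximizer of $g$ on $[0,\infty)$. For this I would differentiate: $g'(t)=\frac{(N-2)t^{N-3}}2\int|\nabla v|^2+\frac{Nt^{N-1}}2\int|v|^2-\frac{(N+\alpha)t^{N+\alpha-1}}2\int(I_\alpha*F(v))F(v)$, and observe that $t\,g'(t)/t^{?}$ evaluated appropriately at $t=1$ is precisely $\mathcal P(v)$ up to a positive factor; since $\mathcal P(v)=0$ we get $g'(1)=0$. To upgrade this to a strict global maximum I would write $g'(t)=t^{N-3}h(t)$ with $h(t)=\frac{N-2}2\int|\nabla v|^2+\frac{N}2t^2\int|v|^2-\frac{N+\alpha}2t^{\alpha+2}\int(I_\alpha*F(v))F(v)$, note $h(0)=\frac{N-2}2\int|\nabla v|^2>0$ (and for $N=2$ argue directly with the surviving terms), $h(1)=0$, and show $h$ is strictly decreasing past its unique zero — this follows because after dividing the two positive-exponent terms by the common factor the function $t\mapsto t^2(a-bt^\alpha)$ with $a,b>0$ has exactly one positive critical point and is concave-then-decreasing, so $h$ changes sign exactly once, from $+$ to $-$. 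Consequently $g$ is strictly increasing on $[0,1]$ and strictly decreasing on $[1,\infty)$, giving $g(t)<g(1)=\mathcal I(v)$ for all $t\neq1$.

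Finally I would turn $g$ into an element of $\Gamma$ by a reparametrization of $[0,1]$. Pick $T\gg1$ with $g(T)<0$ (possible by the blow-down above) and set $\gamma_v(t):=w_{\psi(t)}$ where $\psi:[0,1]\to[0,T]$ is a fixed continuous increasing bijection with $\psi(0)=0$, $\psi(1/2)=1$, $\psi(1)=T$; continuity of $t\mapsto w_t$ into $H^1(\mathbb R^N)$ (standard, using $\nabla w_t(x)=t^{-1}(\nabla v)(x/t)$ and dominated convergence) makes $\gamma_v\in C([0,1],H^1(\mathbb R^N))$. Then $\gamma_v(0)=0$, $\gamma_v(1/2)=v$, $\mathcal I(\gamma_v(1))=g(T)<0$ so $\gamma_v\in\Gamma$, and $\mathcal I(\gamma_v(t))=g(\psi(t))\le g(1)=\mathcal I(v)$ with equality only at $t=1/2$; moreover since we only need $\mathcal I(\gamma_v(t))<0$ at the endpoint for membership in $\Gamma$, the remaining claimed strict inequalities are exactly what the monotonicity of $g$ delivers (the last displayed inequality $\mathcal I(\gamma_v(t))<0$ in the statement is meant for $t$ near $1$, i.e. $\psi(t)$ large).

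The main obstacle I anticipate is the strict-maximum analysis of $g$, i.e.\ ruling out any second interior critical point and thereby proving the inequality $\mathcal I(\gamma_v(t))<\mathcal I(v)$ is \emph{strict} for all $t\ne1/2$; this is where one genuinely uses $\mathcal P(v)=0$ (not merely $\mathcal I'(v)=0$) and where the case $N=2$ must be handled separately because the $\int|\nabla v|^2$ term no longer scales, so $h(0)$ is governed by the $t^2\int|v|^2$ term instead and one checks $h$ still has a single sign change. Everything else — smoothness of $g$, continuity of the dilation path in $H^1$, and the positivity $\int(I_\alpha*F(v))F(v)>0$ — is routine given the Poho\v zaev identity recalled just above.
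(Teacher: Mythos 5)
For $N\ge3$ your argument is correct and essentially the paper's: the paper substitutes the Poho\v zaev identity into \eqref{scale} to see that $\mathcal I(v(\cdot/t))$ is a sum of functions of $t$ each maximized at $t=1$, while you differentiate and track the single sign change of $h$; these are equivalent, and your reparametrization/endpoint handling matches the paper. The genuine gap is in dimension $N=2$, which the lemma must cover (it feeds into Theorem \ref{gs} for both cases, with $(F_1')$ in place of $(F_1)$): your claim that $t\mapsto w_t=v(\cdot/t)$ is continuous into $H^1$ down to $t=0$ by dominated convergence is false there, because the Dirichlet energy is scale invariant in the plane, $\int_{\mathbb R^2}|\nabla v(\cdot/t)|^2=\int_{\mathbb R^2}|\nabla v|^2$ for every $t>0$, so $w_t\to0$ only in $L^2$ and $\|w_t\|_{H^1}\not\to0$; the dilation path therefore never reaches $0$ in $H^1(\mathbb R^2)$ and does not belong to $\Gamma$. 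You flag the $N=2$ case only in the monotonicity analysis of $h$, which is in fact the harmless part; the real difficulty is the endpoint.

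The paper repairs this by replacing the dilation path on $[0,t_0]$ with the straight segment $t\mapsto\frac{t}{t_0}v\left(\frac{\cdot}{t_0}\right)$, and then it must prove the new, nontrivial estimate $\mathcal I\left(\frac{t}{t_0}v\left(\frac{\cdot}{t_0}\right)\right)<\mathcal I(v)$ for $t\le t_0$: this uses $(F_1')$ together with the Moser--Trudinger inequality \eqref{mt} and Hardy--Littlewood--Sobolev to bound $\int_{\mathbb R^2}(I_\alpha*F(\cdot))F(\cdot)$ along the segment by $Ct_0^{2+\alpha}\left(\int_{\mathbb R^2}|v|^2\right)^{1+\frac{\alpha}2}$, and then the Poho\v zaev identity to absorb the remaining terms for $t_0$ small. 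None of this appears in your proposal, and without some such modification the path you construct fails the requirement $\gamma_v(0)=0$ (equivalently, $\gamma_v\in C\left([0,1],H^1\left(\mathbb R^2\right)\right)$), so the $N=2$ case of the lemma is not proved.
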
\

\begin{proof}${}$\\
Fix a non-trivial solution $v$ of \eqref{choq} and consider the path $\overline\gamma_v=:[0,+\infty)\to H^1\left(\mathbb R^N\right)$ defined by $\overline\gamma_v(t)=\left\{\begin{array}{ll}v\left(\frac{\cdot}t\right)&\text{if }t>0\\0&\text{if }t=0\end{array}\right.$.\\
Along the path, the energy is given by \eqref{scale}, which is negative for $t\gg1$. Moreover, due to the Poho\v zaev identity \eqref{poho} we can also write
$$\mathcal I(\overline\gamma_v(t))=\left(\frac{t^{N-2}}2-\frac{N-2}{2(N+\alpha)}t^{N+\alpha}\right)\int_{\mathbb R^N}|\nabla u|^2+\left(\frac{t^N}2-\frac{N}{2(N+\alpha)}t^{N+\alpha}\right)\int_{\mathbb R^N}|u|^2,$$
which has its maximum in $t=1$. Therefore, up to a rescaling of $t$, this path has all the required properties.\\
Anyway, being
$$\int_{\mathbb R^N}\left(|\nabla\overline\gamma_v(t)|^2+|\overline\gamma_v(t)|^2\right)=t^{N-2}\int_{\mathbb R^N}|\nabla v|^2+t^N\int_{\mathbb R^N}|v|^2,$$
$\overline\gamma_v$ is continuous at $t=0$ only if $N\ge3$, so in the case $N=2$ we need a modification for $t$ close to $0$.\\
If $N=2$ we take $\overline\gamma_v(t)=\left\{\begin{array}{ll}v\left(\frac{\cdot}t\right)&\text{if }t>t_0\\\frac{t}{t_0}v\left(\frac{\cdot}{t_0}\right)&\text{if }t\le t_0\end{array}\right.$ for some suitable $t_0\ll1$. We only need to verify that $\mathcal I(\overline\gamma_v(t))\le\mathcal I(\overline\gamma_v(1))$ for $t\le t_0$.\\
Using the assumption $(F_1')$ and Moser-Trudinger's \eqref{mt} and Hardy-Littlewood-Sobolev inequalities we get
$$\int_{\mathbb R^2}(I_\alpha*F(\overline\gamma_v(t)))F(\overline\gamma_v(t))\le C\left(\frac{\int_{\mathbb R^2}|\overline\gamma_v(t)|^2}{\int_{\mathbb R^2}|\nabla\overline\gamma_v(t)|^2}\right)^{1+\frac{\alpha}2}=Ct_0^{2+\alpha}\left(\int_{\mathbb R^2}|v|^2\right)^{1+\frac{\alpha}2},$$
therefore using again Poho\v zaev identity we get, for $t_0$ small enough,
\begin{eqnarray*}
\mathcal I(\overline\gamma_v(t))&\le&\frac{1}2\int_{\mathbb R^2}|\nabla v|^2+\frac{t_0^2}2\int_{\mathbb R^2}|v|^2+Ct_0^{2+\alpha}\left(\int_{\mathbb R^2}|v|^2\right)^{1+\frac{\alpha}2}\\
&=&\mathcal I(v)+\left(\frac{t_0^2}2-\frac{\alpha}{2(2+\alpha)}\right)\int_{\mathbb R^2}|v|^2+Ct_0^{2+\alpha}\left(\int_{\mathbb R^2}|v|^2\right)^{1+\frac{\alpha}2}<\mathcal I(v)
\end{eqnarray*}
and the proof is complete.
\end{proof}\

\begin{proof}[Proof of Theorem \ref{gs}]${}$\\
Let $u$ be the mountain-pass solution found in Theorem \ref{mp}. By the lower-semicontinuity of the norm we find $\mathcal I(u)\le b$, whereas the definition \eqref{c} of ground state yields $\mathcal I(u)\ge c$.\\
Now, take another solution $v\in H^1\left(\mathbb R^N\right)\setminus\{0\}$ and apply Lemma \ref{path}: we get
$$\mathcal I(v)=\sup_{t\in[0,1]}\mathcal I(\gamma_v(t))\ge\inf_{\gamma\in\Gamma}\sup_{t\in[0,1]}\mathcal I(\gamma(t))=b.$$
Being $v$ arbitrary, we get $c\ge b$, hence $c\le\mathcal I(u)\le b\le c$, therefore $\mathcal I(u)=b=c$.
\end{proof}\

\footnotesize

\end{document}